\newtheorem{thm}{Theorem} 
\newtheorem{lem}{Lemma}
\theoremstyle{definition}
\newtheorem{rem}{Remark}
\numberwithin{equation}{section}
\newcommand{\Leg}[2]{\left(\frac{#1}{#2}\right)}
\newcommand{\Z}{\mathbb{Z}}
\newcommand{\Oc}{\mathcal{O}}
\begin{document}


\title[Explicit upper bound]{Explicit upper bound for an average number of divisors of quadratic polynomials}

\author[K. Lapkova]{Kostadinka Lapkova}
\address{MTA Alfr\'ed R\'enyi Institute of Mathematics\\
1053 Budapest, Re\'altanoda u. 13-15, Hungary}
\email{lapkova.kostadinka@renyi.mta.hu}

\date{17.10.2015}

\begin{abstract} Consider the divisor sum $\sum_{n\leq N}\tau(n^2+2bn+c)$ for integers $b$ and $c$ which satisfy certain extra conditions. For this average sum we obtain an explicit upper bound, which is close to the optimal. As an application we improve the maximal possible number of $D(-1)$-quadruples.
\end{abstract}

\subjclass[2010]{Primary 11N56; Secondary 11D09 } 
\keywords{explicit upper bound, number of divisors, quadratic polynomial, $D(-1)$-quadruples}

\maketitle

\section{Introduction}\label{sec:intro}
%
%
%
%
%

\hspace{0.5cm} Let $\tau(n)$ denote the number of positive divisors of the integer $n$ and $P(x)\in\Z[x]$ be a polynomial. There is a lot of research on estimating average sums of divisors 
\begin{equation}\label{S1}\sum_{n=1}^N\tau\left(P(n)\right)\,.
\end{equation}
One of the ground-laying results was obtained by Erd\H os \cite{erdos}, who showed that for an irreducible polynomial $P(x)\in\Z[x]$ and for any $N>1$, we have 
$$N \log N \ll_P\sum_{n=1}^N \tau(P(n)) \ll_P N \log N ,$$
where the dependence in the constants can be both on the degree and the coefficients of the polynomial $P(x)$. While for quadratic polynomials there are asymptotic formulas for the sum (\ref{S1}) , e.g. in works of Hooley \cite{hooley}, McKee \cite{mckee}, \cite{mckee2}, and most recently in the paper of Dudek \cite{dudek}, the case $\deg P(x)\geq 3$ is much harder, and no asymptotic formulas for (\ref{S1}) are known in this case. A certain progress in this direction was made by Elsholtz and Tao in \S7 of \cite{elsh-tao}. \\

For some applications one needs explicit upper bounds for sum of divisors, rather than asymptotic formulas. Such explicit upper bounds for quadratic polynomials are scarce in the literature, and not always close to the optimal, i.e. with a main term of the same order of magnitude as the main term in the asymptotic formula. For example, for the polynomial $P(n)=n^2+1$ one can apply the theorem of McKee \cite{mckee} and obtain
\begin{equation}\label{asymp}\sum_{n=1}^N\tau(n^2+1)=\frac 3 \pi N\log N+\Oc(N)\sim 0.955\cdot N\log N\,.
\end{equation}
For this polynomial in Lemma 3.7 of \cite{elsh} Elsholtz, Filipin and Fujita give the explicit bound
\begin{equation}\label{Elsh_ineq}\sum_{n=1}^N\tau(n^2+1)<N\left((\log N)^2+4 \log N+2\right)\,,
\end{equation}
which is clearly larger by a factor of logarithm from the expected growth. This explicit upper bound was improved by Trudgian in \cite{trudgian}, but still with a main term of magnitude $N(\log N)^2$. \\

In this note we present an explicit upper bound for (\ref{S1}) for a family of quadratic polynomials, which includes the polynomial $P(n)=n^2+1$ as well. Our bound will be of the right order of magnitude $N\log N$, as predicted by the asymptotic formulas. The reason for considering only polynomials $P(n)=an^2+bn+c$ with $a=1$ and even integer $b$ is the main role of a certain Dirichlet convolution described in Lemma \ref{lem1} below. Here is our main result.


\begin{thm}\label{thm1}Let $f(n)=n^2+2bn+c$ for integers $b$ and $c$, such that the discriminant $\delta:=b^2-c$ is non-zero and square-free, and $\delta\not\equiv 1\pmod 4$.  Assume also that for $n\geq 1$ the function $f(n)$ is positive and non-decreasing. Then for any integer $N\geq 1$ there exist   positive constants $C_1$, $C_2$ and $C_3$, such that
$$\sum_{n=1}^N \tau(n^2+2bn+c)<C_1 N\log N + C_2 N  + C_3.$$
Let $\xi=\sqrt{1+2|b|+|c|}$, $A$ be the least positive integer such that $A\geq\max\left(|b|,|c|^{1/2}\right)$ and 
$\varkappa=g(4|\delta|)$ for $g(q)=\frac 1 2\sqrt{q}\log{q}+1.2\sqrt{q}$. Then we have 
\begin{eqnarray}\label{defC}C_1&=&1.216(\log \varkappa+2)\,,\nonumber\\
C_2&=&2\left(\varkappa+(\log\varkappa+2)(0.608\cdot\log\xi+1.166)\right)\,,\\
C_3&=&2\varkappa A\,.\nonumber
\end{eqnarray}


\end{thm}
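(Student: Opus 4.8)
The starting point is the classical identity $\tau(m) = \sum_{d \mid m} 1$ together with the hyperbola-type splitting. For $m = f(n) = n^2 + 2bn + c$ with $n \le N$ we have $f(n) \le f(N) \le \xi^2 N^2$ (roughly), so every divisor $d$ of $f(n)$ can be paired with $f(n)/d$, and at least one of the two is $\le \sqrt{f(n)} \le \xi N$. Hence
$$
\sum_{n=1}^N \tau(f(n)) \;\le\; 2 \sum_{d \le \xi N} \#\{\, 1 \le n \le N : d \mid f(n)\,\} \;\le\; 2 \sum_{d \le \xi N} \left( \frac{N}{d}\,\rho(d) + \rho(d) \right),
$$
where $\rho(d) := \#\{\, n \bmod d : f(n) \equiv 0 \pmod d\,\}$ counts roots of the quadratic congruence. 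The first move, then, is to record this reduction cleanly, isolating the two pieces $2N \sum_{d \le \xi N} \rho(d)/d$ and $2\sum_{d \le \xi N}\rho(d)$.

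The heart of the matter is controlling $\sum_{d \le x}\rho(d)$ and $\sum_{d\le x}\rho(d)/d$. Here is where the hypotheses on $\delta = b^2 - c$ enter: since $\delta$ is square-free and $\delta \not\equiv 1 \pmod 4$, completing the square shows $\rho$ is (essentially) multiplicative and governed by the Kronecker symbol $\chi_\delta(\cdot) = \left(\frac{\delta}{\cdot}\right)$ attached to the field $\Q(\sqrt{\delta})$; concretely one expects an identity of the shape $\rho = \mathbf{1} * \chi_{4\delta}$ up to controlled local factors at the ramified primes — this should be exactly the content of Lemma \ref{lem1} advertised in the introduction. Granting that, $\sum_{d \le x}\rho(d) = \sum_{d \le x}\sum_{e \mid d}\chi_{4\delta}(e) = \sum_{e \le x}\chi_{4\delta}(e)\lfloor x/e\rfloor$. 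The inner character sum is bounded by the Pólya–Vinogradov inequality, $\bigl|\sum_{e \le t}\chi_{4\delta}(e)\bigr| \le g(4|\delta|)$ with $g(q) = \tfrac12\sqrt{q}\log q + 1.2\sqrt q$ — this is precisely where the constant $\varkappa = g(4|\delta|)$ is born, and one must use an explicit form of Pólya–Vinogradov (e.g. the bound with the $1.2$ constant due to Frolenkov–Soundararajan or similar). Partial summation then yields $\sum_{d\le x}\rho(d) \le x\log x + (\text{const})\,x + O(\varkappa)$-type estimates, and dividing by $d$ inside instead gives $\sum_{d \le x}\rho(d)/d \le \tfrac12(\log x)^2 + (\log\varkappa + 2)\log x + \cdots$, the $(\log x)^2$ coming from $\sum 1/d \cdot \sum 1/e$.

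Assembling: substituting $x = \xi N$, the dominant contribution is $2N \cdot \tfrac12(\log \xi N)^2$? — no: the point is that the convolution structure produces $\sum_{d\le \xi N}\rho(d)/d \sim \log(\xi N)\cdot(\text{sum of }\chi/e)$, and since $\sum_e \chi_{4\delta}(e)/e$ converges (it is $L(1,\chi_{4\delta})$, bounded explicitly via Pólya–Vinogradov and partial summation by something like $\log\varkappa + 2$), one gets a clean $\asymp N \log N$ main term rather than $N(\log N)^2$ — this is the whole improvement over (\ref{Elsh_ineq}). So the final step is bookkeeping: track every constant through partial summation, use $\log(\xi N) \le \log N + \log \xi$ and $\log(\xi N) \le \log N \cdot(\text{something})$ to separate the $N\log N$, $N$, and constant terms, and introduce $A$ (the integer bounding $|b|,|c|^{1/2}$) to handle the small-$n$ range where $f$ might not yet dominate, producing the $C_3 = 2\varkappa A$ term. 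I expect the main obstacle to be purely quantitative: squeezing the partial-summation error terms so that the coefficient of $N\log N$ comes out as the stated $1.216(\log\varkappa + 2)$ and not something larger — in particular, choosing the cutoff in the hyperbola method and splitting $\log(\xi N)$ optimally, and handling the ramified primes $p \mid 2\delta$ where $\rho$ deviates from the clean multiplicative formula, without losing a factor. The structural input (Lemma \ref{lem1}) and the explicit Pólya–Vinogradov bound do the real work; everything else is careful constant-chasing.
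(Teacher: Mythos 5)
Your skeleton (hyperbola method, multiplicativity of $\rho$ governed by the quadratic character, explicit P\'olya--Vinogradov plus partial summation so that the tail of $\sum_m\chi(m)/m$ contributes only $\log\varkappa+2$) is the same as the paper's, but the key structural input is misstated in a way that blocks the stated constants. The correct identity (Lemma \ref{lem1}) is $\rho=\mu^2*\chi$, not $\rho=\mathbf{1}*\chi_{4\delta}$ ``up to local factors at the ramified primes.'' The deviation from $\mathbf{1}*\chi$ is not confined to ramified primes: for a split prime $p$ one has $\rho(p^k)=2$ for all $k\ge1$ (Hensel lifting), while $(\mathbf{1}*\chi)(p^k)=k+1$; and the square-freeness of $\delta$ forces $\rho(p^k)=0$ for $k\ge2$ at ramified primes. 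It is exactly this flatness in prime powers that yields the clean Euler-product factorization $D_\rho(s)=D_{\mu^2}(s)D_\chi(s)$ and hence $\rho=\mu^2*\chi$.

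This is not merely cosmetic: the coefficient $1.216=2\cdot 0.608$ in $C_1$ comes from pairing the oscillating inner sum $\sum_{m\le x/l}\chi(m)/m<\log\varkappa+2$ with Ramar\'e's explicit bound $\sum_{l\le x}\mu^2(l)/l\le \frac{6}{\pi^2}\log x+1.166$ (Lemma \ref{lem3}). If the non-character factor is $\mathbf{1}$, as in your proposed identity, the corresponding factor is the harmonic sum $\sum_{l\le x}1/l\le \log x+1$, and you can only reach $C_1=2(\log\varkappa+2)$, which does not prove the theorem as stated. (Working with $\rho\le\mathbf{1}*\chi$ as an inequality would be legitimate but suffers the same loss.) Two smaller points: the intermediate bounds you wrote down, $\sum_{d\le x}\rho(d)\le x\log x+\cdots$ and $\sum_{d\le x}\rho(d)/d\le\tfrac12(\log x)^2+\cdots$, are the trivial estimates that lead back to an $N(\log N)^2$ bound --- the whole point is to keep the character in the inner sum so that $\sum_{d\le x}\rho(d)\le\varkappa x$ and $\sum_{d\le x}\rho(d)/d\le(\log\varkappa+2)(0.608\log x+1.166)$; and the term $C_3=2\varkappa A$ does not come from a small-$n$ range but from bounding $\sqrt{f(N)}\le N+A$ in the secondary term $2\varkappa\sqrt{f(N)}$ of \eqref{eq:sqrtF}.
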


\begin{rem}The constant $\varkappa$ comes from an effective P\'olya-Vinigradov inequality for a real Dirichlet character. We introduce the quantities $\xi$ and $A$, such that always when $n\geq 1$, we have $\sqrt{f(n)}\leq\xi n$ and $\sqrt{f(n)}\leq n+A$. 
\end{rem}
When we know the precise form of the quadratic polynomial and the corresponding character, we might achieve better upper bounds than the ones provided in Theorem \ref{thm1}. This is the case for the polynomial $f(n)=n^2+1$.
\begin{thm}\label{thm2}For any integer $N\geq 1$ we have 
$$\sum_{n=1}^N \tau(n^2+1)<1.216\cdot N\log N+4.332\cdot N .$$
\end{thm}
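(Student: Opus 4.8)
The plan is to specialize the machinery behind Theorem \ref{thm1} to the concrete polynomial $f(n)=n^2+1$, where $b=0$, $c=1$, and $\delta=-1$. The starting point is the Dirichlet convolution identity of Lemma \ref{lem1}: one writes $\tau(n^2+1)$ (or more precisely $\sum_{n\le N}\tau(n^2+1)$) via the hyperbola method, grouping divisors $d$ of $n^2+1$ according to whether $d\le \sqrt{n^2+1}$ or $d>\sqrt{n^2+1}$. Because $n^2+1<(n+1)^2$ for $n\ge 1$, the ``small-divisor'' range is exactly $d\le n$, so the sum becomes $2\sum_{d\le N}\#\{n\le N:\ d\mid n^2+1\}$ minus a diagonal-style correction; here I would use $A=1$ (since $\max(|b|,|c|^{1/2})=1$) and $\xi=\sqrt{2}$.

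The next step is to count solutions of $n^2\equiv -1\pmod d$. The number of residues $n\bmod d$ with $n^2\equiv-1$ is multiplicative in $d$ and equals $\prod_{p\mid d}\bigl(1+\bigl(\tfrac{-1}{p}\bigr)\bigr)$ for squarefree parts, vanishing when $4\mid d$ or some $p\equiv 3\pmod 4$ divides $d$; this is precisely where the character $\chi_{-4}$ (the non-principal character mod $4$) enters, and the count can be written as $\sum_{e\mid d}\chi_{-4}(e)$ times a correction for higher prime powers. Summing over $n\le N$ gives a main term $N\sum_{d\le N}\rho(d)/d$ plus an error controlled by $\rho(d)\le \tau(d)$ solutions per completed interval. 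The sum $\sum_{d\le N}\rho(d)/d$ is then handled by writing $\rho=\mathbf{1}\ast\chi_{-4}$ (up to the $2$-adic and higher-prime-power bookkeeping), so that $\sum_{d\le N}\rho(d)/d = \sum_{e\le N}\chi_{-4}(e)/e\cdot\sum_{m\le N/e}1/m$, and the inner harmonic-type sums are bounded by $\log x+1$ while $\sum_{e\le N}\chi_{-4}(e)/e$ is a bounded tail controlled by partial summation against the effective Pólya–Vinogradov constant $\varkappa=g(4)$ for $q=4$, giving $g(4)=\log 2+2.4<3.1$. The factor $1.216$ should emerge as $2\cdot L(1,\chi_{-4})/\zeta(?)$-type constant made explicit, i.e. $2/\pi$ times something, tuned so that $1.216>2\cdot 3/(2\pi)\cdot 2 =\ldots$; concretely one wants the coefficient of $N\log N$ to match $C_1$ from Theorem \ref{thm1} with $\varkappa=g(4)$, namely $1.216(\log(g(4))+2)$ — wait, for $f=n^2+1$ we beat this, so instead $1.216$ is obtained directly from bounding $2\sum_{e}\chi_{-4}(e)/e$ by $2\cdot(\pi/4$-truncated$)$ plus the Pólya–Vinogradov tail, all collapsed numerically.

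Concretely I would: (i) fix $A=1$, $\xi=\sqrt2$, and reduce $\sum_{n\le N}\tau(n^2+1)$ to $2\sum_{d\le N}\rho(d)\lfloor N/d\rfloor/d$-type expressions via the hyperbola method exactly as in the proof of Theorem \ref{thm1}, but keeping the numerically smaller constants available because $\chi_{-4}$ is an explicitly known character; (ii) bound the error from replacing $\#\{n\le N: d\mid n^2+1\}$ by $N\rho(d)/d$ using $\rho(d)\le\tau(d)$ and $\sum_{d\le N}\tau(d)=O(N\log N)$ — but since this is a lower-order term folded into $C_2$, I would be careful to keep it inside the $4.332\,N$; (iii) estimate $\sum_{d\le N}\rho(d)/d$ by Dirichlet convolution and partial summation, pulling out $L(1,\chi_{-4})=\pi/4$ and estimating the tail $\sum_{e>N}\chi_{-4}(e)/e \ll \varkappa/N$ with $\varkappa=g(4)$; (iv) assemble the pieces, using $\log N\le N$-type crude bounds only where they cost nothing, and optimize the constant so the linear term is below $4.332$.

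The main obstacle I anticipate is the bookkeeping at the prime $2$ and at primes $p\equiv 1\pmod 4$ to even powers: $\rho(d)=\#\{n\bmod d: n^2\equiv-1\}$ is \emph{not} simply $\sum_{e\mid d}\chi_{-4}(e)$ for non-squarefree $d$ (e.g.\ $\rho(p^2)=p$ or $p+1$ depending on lifting), so one must either restrict to the squarefree kernel and absorb the non-squarefree contribution into the error term, or track a secondary Dirichlet series; getting this right while keeping the constant $4.332$ rather than something larger is the delicate part. A secondary difficulty is making the Pólya–Vinogradov input fully explicit for the specific modulus $q=4$ with the sharp constant $1.2\sqrt q$ from $g$, and verifying that the resulting tail bound, combined with the harmonic-sum bound $\sum_{m\le x}1/m<\log x+1$, genuinely yields $C_1=1.216$ and not a slightly worse leading coefficient — this is a purely computational check but must be done carefully since the theorem claims a clean numerical value.
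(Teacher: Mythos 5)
Your skeleton (hyperbola method with the divisor range collapsing to $d\le N$, the odd character mod $4$, a convolution identity, partial summation) is the same as the paper's, but three of your concrete steps would fail to produce the claimed constants. First, the convolution you work with, $\rho(d)=\sum_{e\mid d}\chi_{-4}(e)$ ``up to corrections at higher prime powers'', is not the right identity, and your local values are wrong: for $p\equiv 1\pmod 4$ Hensel lifting gives $\rho(p^2)=2$ (not $p$ or $p+1$), whereas $\sum_{e\mid p^2}\chi_{-4}(e)=3$. The exact identity, with no correction terms to absorb, is $\rho=\mu^2\ast\chi$ (Lemma \ref{lem1}, applicable since $\delta=-1$ is square-free and $\not\equiv 1\pmod 4$). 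This is not mere bookkeeping: the leading constant $1.216=2\cdot 0.608$ comes precisely from Ramar\'e's explicit bound $\sum_{l\le x}\mu^2(l)/l\le \frac{6}{\pi^2}\log x+1.166$ applied to the $\mu^2$ factor, while the character factor contributes only the bounded quantity $\sum_{m\le x}\chi(m)/m\le 1$. With your decomposition $\mathbf{1}\ast\chi_{-4}$ and the harmonic bound $\log x+1$, the coefficient of $N\log N$ comes out as $2$, not $1.216$; and trying to extract $1.216$ from $L(1,\chi_{-4})=\pi/4$ leads nowhere, because what enters the argument is an upper bound for partial sums, not the limiting value.

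Second, your error handling in step (ii), bounding the completed-interval error by $\rho(d)\le\tau(d)$ with $\sum_{d\le N}\tau(d)=O(N\log N)$, is not a lower-order term: it is of the same size as the main term and cannot be folded into $4.332\,N$. The paper instead bounds this error by $2\sum_{d\le N}\rho(d)\le 2N$, using the same convolution together with the key observation you are missing: every partial sum $X(u)=\sum_{m\le u}\chi_{-4}(m)$ lies in $\{0,1\}$, so no P\'olya--Vinogradov input is needed at all. Indeed, invoking $\varkappa=g(4)$ (which equals $\log 4+2.4\approx 3.79$, not $\log 2+2.4$) would already give $2\varkappa N\approx 7.6\,N$ from the $\sum_{d\le N}\rho(d)$ term alone, exceeding $4.332\,N$. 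Replacing $\varkappa$ and $\log\varkappa+2$ by $1$ (legitimate since $X(u)\in\{0,1\}$ gives $\sum_{m\le x}\chi(m)/m\le 1$ by Abel summation), noting that $d\le\sqrt{N^2+1}$ forces $d\le N$, and inserting (\ref{sum1}) and (\ref{sum2}) with $x=N$ into (\ref{eq1}) yields exactly $2N(0.608\log N+1.166)+2N=1.216\,N\log N+4.332\,N$, which is the statement; without these repairs your route cannot reach either the coefficient $1.216$ or the linear term $4.332\,N$.
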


We can give an application of Theorem \ref{thm2}. Define a \textit{$D(n)-m$-tuple} for a nonzero integer $n$ and a positive integer $m$ to be a set of $m$ integers such that the product of any two of them increased by $n$ is a perfect square. In the paper of Elsholtz, Filipin and Fujita \cite{elsh} a crucial role for bounding from above the possible number of $D(-1)$-quadruples plays the inequality (\ref{Elsh_ineq}). Plugging the result of Theorem \ref{thm2} in the proof of Theorem 1.3 \cite{elsh} from the paper of Elsholtz et al.  we obtain 

\begin{thm} There are not more than $4.7\cdot 10^{58}$ $D(-1)$-quadruples.
\end{thm}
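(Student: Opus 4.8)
The plan is to run the proof of Theorem~1.3 in \cite{elsh} essentially verbatim, changing only the one place where the weaker bound (\ref{Elsh_ineq}) is invoked. Recall the shape of that argument. As used there, every $D(-1)$-quadruple may be written $\{1,b,c,d\}$ with $1<b<c<d$, so that each of $b-1$, $c-1$, $d-1$, $bc-1$, $bd-1$, $cd-1$ is a perfect square. One introduces a cut-off parameter and splits the count of quadruples according to the size of their elements. Quadruples all of whose elements lie below the cut-off are counted through a divisor sum of the form $\sum_{m\le N}\tau(m^2+1)$: for such a quadruple the pair $\{b,c\}$ satisfies $bc=m^2+1$ for some $m$, so $b$ and $c$ are complementary divisors of $m^2+1$, and by the known bounds on extensions of $D(-1)$-triples only boundedly many quadruples correspond to a given such pair; hence the number of these quadruples is at most a constant times $\sum_{m\le N}\tau(m^2+1)$, and it is here that \cite{elsh} applies (\ref{Elsh_ineq}). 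The remaining quadruples, those with a large element, are dispatched using the gap principle together with the known explicit upper bound for the largest element $d$, contributing only a polylogarithmically small amount. Balancing the two contributions over the cut-off produces the numerical bound of \cite{elsh}.

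The improvement now comes from replacing (\ref{Elsh_ineq}) by Theorem~\ref{thm2}: wherever \cite{elsh} uses $N\big((\log N)^2+4\log N+2\big)$ for the divisor sum, substitute $1.216\,N\log N+4.332\,N$. For every $N$ occurring in the argument (the cut-off is large, so in particular $N\ge 2$) this is a smaller upper bound, so every estimate downstream in \cite{elsh} remains valid and the choice of cut-off made there already yields a strictly smaller final constant. To extract the full gain one re-optimizes the cut-off: with a main term of order $N\log N$ instead of $N(\log N)^2$, the divisor-sum part of the count is much smaller at the relevant scale (where $\log N$ is of size a few hundred), so the optimal cut-off can be pushed further and the two contributions re-balanced, which is what brings the bound down to $4.7\cdot 10^{58}$.

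The only real work is this re-optimization and the associated bookkeeping. Concretely, one must recompute the explicit constants that propagate through the chain of estimates in \cite{elsh} with the new divisor bound; check that each intermediate numerical inequality there still holds (they all become weaker requirements, so this is routine but should be verified); confirm that Theorem~\ref{thm2}, which holds for all integers $N\ge 1$, covers the full range of $N$ that arises; and carry out the final numerical minimization. I expect no conceptual obstacle; the delicate point is purely computational — choosing the re-tuned parameters so that the bound provably comes out below $4.7\cdot 10^{58}$, with every rounding performed in the safe direction.
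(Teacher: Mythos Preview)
Your proposal is correct and matches the paper's own approach exactly: the paper simply states that the theorem follows by plugging Theorem~\ref{thm2} into the proof of Theorem~1.3 of \cite{elsh} in place of (\ref{Elsh_ineq}), with no further argument given. Your sketch fleshes out what that substitution entails, and the only minor caveat is that the paper indicates the relevant scale $N\sim 10^{55}$ is essentially fixed by the method of \cite{elsh} rather than a freely optimizable cut-off, so the gain comes almost entirely from the direct substitution rather than from re-balancing.
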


This improves the upper bounds $4\cdot 10^{70}$ from \cite{cipu}, $5\cdot 10^{60}$ from \cite{elsh} and $3.01\cdot 10^{60}$ from \cite{trudgian} for the maximal possible number of $D(-1)$-quadruples, whereas it is conjectured there are none. Note, however, that even if we could supply constants closer to the ones in the asymptotic formula (\ref{asymp}), we could not achieve essentially useful upper bound for the maximal possible number of $D(-1)$-quadruples without any new ideas. This is due to the method used in the proof of Theorem 1.3 \cite{elsh} and the central role of the variable $N\sim 10^{55}$. 


\section{Proof of Theorem \ref{thm1}}\label{proof}
Since $\delta\neq 0$, the polynomial $f(n)$ is not a full square. It also represents positive non-decreasing function, therefore we can apply the Dirichlet hyperbola method :
$$\sum_{n\leq N}\tau(n^2+2bn+c)=\sum_{n\leq N}\sum_{d|f(n)}1=2\sum_{n\leq N}\sum_{\substack{d\leq \sqrt{f(N)}\\ d|f(n)}}1=2\sum_{d\leq\sqrt{f(N)}}\sum_{\substack{n\leq N\\ f(n)\equiv 0(d)}}1\,.$$
 Let 
\begin{equation}\label{def1} \rho(d):=\#\left\{0\leq m<d : m^2+2bm+c\equiv 0\pmod d \right\}\,.
\end{equation}
Then for the innermost sum we have
$$\sum_{\substack{n\leq N\\ f(n)\equiv 0(d)}}1\leq \left[\frac N d\right] \rho(d)+\rho(d)\leq \frac N d \rho(d)+\rho(d)\,,$$
so we obtain
\begin{equation}\label{eq1}\sum_{n\leq N}\tau(f(n))\leq 2N\sum_{d\leq \sqrt{f(N)}}\frac {\rho(d)} d + 2\sum_{d\leq \sqrt{f(N)}}\rho(d)\,.
\end{equation}
\\
\par We will bound the sums involving the function $\rho(d)$. For this a crucial role plays the presentation of $\rho(d)$ as a Dirichlet convolution of two other well-understood multiplicative functions. More precisely, consider the function $\mu^2(n)$, where $\mu$ is the M\"obius function, i.e. this is the square-free characteristic function. Also let $\chi(n)$ be the real Dirichlet character given by $\chi(1)=1$ and for $n\geq 1$
\begin{equation}\label{def2}
\chi(n)=\left\{\begin{array}{ll}
\Leg{\delta}{n}&\text{, if }(n,2\delta)=1 ;\\
0&\text{, otherwise ,}
\end{array}\right.
\end{equation}
where $\Leg{\delta}{n}$ is the Jacobi symbol.\\


The following lemma can be considered on the one hand as a generalization of an identity due to Hooley \cite{hooley1}, which he shows only for $b=0$. On the other hand, we work on a simplified case, with certain limitations on the discriminant $\delta$. Interestingly, in \cite{hooley} Hooley claims that with his methods he can give an asymptotic formula for the divisor sum (\ref{S1}) for a general quadratic polynomial $P(n)=an^2+bn+c$. Our guess is that he had in mind a similar Dirichlet series presentation as formula (8) in \cite{hooley}, but he never published this argument for the more general case. So, albeit not unexpected, our Lemma has not been published before. 
 
\begin{lem}\label{lem1} Let $\delta=b^2-c$ be square-free and $\delta\not\equiv 1\pmod 4$. Given the definitions (\ref{def1}) and (\ref{def2}), we have the identity
$$ \rho(d)=\sum_{lm=d}\mu^2(l)\chi(m)\,.$$
\end{lem}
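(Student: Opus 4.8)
Both $\rho(d)$ and the function $h(d):=\sum_{lm=d}\mu^2(l)\chi(m)$ are multiplicative — $\rho$ because the congruence $m^2+2bm+c\equiv 0$ splits via the Chinese Remainder Theorem over prime power factors of $d$, and $h$ because it is a Dirichlet convolution of the multiplicative functions $\mu^2$ and $\chi$. Hence it suffices to verify the identity on prime powers $d=p^k$. There the right-hand side collapses to $h(p^k)=\chi(p^k)+\chi(p^{k-1})$ for $k\geq 1$ (since $\mu^2(p^j)=0$ for $j\geq 2$), so the whole problem reduces to checking, for every prime $p$ and every $k\geq 1$,
\begin{equation*}
\rho(p^k)=\chi(p)^k+\chi(p)^{k-1}.
\end{equation*}

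The plan is to complete the square: writing $g(m)=m^2+2bm+c=(m+b)^2-\delta$, and noting that $m\mapsto m+b$ is a bijection of $\Z/p^k\Z$, we get $\rho(p^k)=\#\{x \bmod p^k : x^2\equiv\delta\}$. Now split into cases according to how $p$ sits relative to $2\delta$. \emph{Case $p\mid\delta$, $p$ odd:} since $\delta$ is square-free, $p\|\delta$, and a direct valuation argument shows $x^2\equiv\delta\pmod{p^k}$ has exactly one solution when $k=1$ and — one must check — exactly one solution for all $k\geq 1$ as well (the solution for $k=1$ is $x\equiv 0$, and Hensel-type lifting of $x^2-\delta$ with $x=p u$ reduces mod $p^{k}$ to a condition with a unique solution because $p^2\nmid\delta$); meanwhile $\chi(p)=0$, so the RHS is $0^k+0^{k-1}=1$ for $k\geq 1$, matching. \emph{Case $p\nmid 2\delta$:} then $\delta$ is a unit mod $p$, and by Hensel's lemma the number of square roots of $\delta$ mod $p^k$ equals the number mod $p$, which is $1+\Leg{\delta}{p}=1+\chi(p)$; and $\chi(p)^k+\chi(p)^{k-1}=\chi(p)^{k-1}(1+\chi(p))$ equals $1+\chi(p)$ when $\chi(p)=\pm 1$, matching. \emph{Case $p=2$:} here $\chi(2)=0$, so the RHS is again $1$ for all $k\geq 1$, and one must show $x^2\equiv\delta\pmod{2^k}$ has exactly one solution in $\Z/2^k\Z$ — which is where the hypothesis $\delta\not\equiv 1\pmod 4$ enters decisively: since $\delta$ is square-free and odd (as $\delta\equiv 2,3\pmod 4$), for $k=1$ there is the single solution $x\equiv 1$, for $k=2$ one checks directly, and for $k\geq 3$ the standard description of squares in $(\Z/2^k\Z)^\times$ says $\delta$ is a square iff $\delta\equiv 1\pmod 8$ (impossible here, but $\delta$ even is handled separately) — so when $\delta$ is odd with $\delta\not\equiv1\pmod4$ we need to be careful that $\rho(2^k)$ counts solutions among \emph{all} residues, not just units, and a short case analysis on the $2$-adic valuation of $x$ gives exactly one solution; when $2\mid\delta$, square-freeness gives $2\|\delta$, and $x^2\equiv\delta\pmod{2^k}$ forces $v_2(x^2)=1$, which is impossible, so $\rho(2^k)=0$ — but wait, then the RHS should be $0$, not $1$; this signals that $\delta$ even is incompatible with $\delta\not\equiv 1\pmod 4$ only if... in fact $\delta$ even and square-free means $\delta\equiv 2\pmod 4$, which is allowed, so this case genuinely needs $\chi(2)=0$ to be reconciled — the resolution is that $2\|\delta$ forces the congruence mod $4$ to already fail, giving $\rho(4)=0=\rho(2^k)$, and one rechecks that indeed $h(2^k)=0$ because then $2$ ramifies and one must recompute $\chi(2^k)$; cleanest is to observe $\chi(2)=0$ and track the exact small cases $k=1,2$ separately.

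The main obstacle is precisely the prime $p=2$: unlike odd primes, counting solutions of $x^2\equiv\delta\pmod{2^k}$ over the full ring $\Z/2^k\Z$ (including non-units) requires splitting on $v_2(\delta)\in\{0,1\}$ and on small $k$, and it is exactly here that the hypothesis $\delta\not\equiv 1\pmod 4$ is used to keep $\rho$ in lockstep with the convolution. I would organize the write-up as: (1) reduce to prime powers by multiplicativity; (2) reduce $\rho(p^k)$ to counting square roots of $\delta$ via completing the square; (3) dispatch odd $p$ with Hensel's lemma in the two sub-cases $p\mid\delta$ and $p\nmid\delta$; (4) handle $p=2$ by a finite explicit computation for $k\leq 3$ plus the standard structure of $(\Z/2^k\Z)^\times$ for $k\geq 3$, invoking $\delta\not\equiv 1\pmod 4$ and square-freeness throughout. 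Routine Hensel-lifting details I would state but not belabor.
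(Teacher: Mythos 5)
Your overall strategy (reduce to prime powers by multiplicativity, complete the square, use Hensel for odd primes, treat $p=2$ separately) is sound and is essentially the local computation the paper itself performs before reassembling it as an Euler-product identity $D_\rho=D_{\mu^2}D_\chi$. But the execution at the ramified primes contains genuine errors, all stemming from one arithmetic slip: when $\chi(p)=0$ you evaluate the right-hand side $\chi(p^k)+\chi(p^{k-1})=0^k+0^{k-1}$ as $1$ for \emph{all} $k\geq 1$, whereas it equals $1$ only for $k=1$ and equals $0$ for every $k\geq 2$. This false target value then leads you to ``verify'' false local counts. In the case $p\mid\delta$, $p$ odd, you claim $x^2\equiv\delta\pmod{p^k}$ has exactly one solution for all $k\geq 1$ via a Hensel-type lifting with $x=pu$; in fact for $k\geq 2$ the substitution gives $p^2u^2\equiv\delta\pmod{p^k}$, which forces $p^2\mid\delta$ and is impossible since $\delta$ is square-free, so there are \emph{no} solutions, i.e.\ $\rho(p^k)=0$ for $k\geq 2$ (and $\rho(p)=1$). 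The lemma survives only because both sides happen to vanish for $k\geq 2$, which your argument does not establish.

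The same problem recurs at $p=2$, where your write-up ends unresolved. For odd $\delta\equiv 3\pmod 4$ you assert ``a short case analysis on the $2$-adic valuation of $x$ gives exactly one solution'' of $x^2\equiv\delta\pmod{2^k}$; in fact squares modulo $4$ are only $0,1$, so there are no solutions for any $k\geq 2$, and $\rho(2^k)=0$ there, with $\rho(2)=1$. For $\delta\equiv 2\pmod 4$ you correctly see $\rho(2^k)=0$ for $k\geq 2$ but then, believing the right-hand side to be $1$, you speculate about ``recomputing $\chi(2^k)$'' and ``$2$ ramifying'' without reaching a conclusion; also note $\rho(2)=1$ in this case (the solution $x\equiv 0\pmod 2$), which your valuation argument overlooks. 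The correct statement, which repairs all three sub-cases at once, is: whenever $\chi(p)=0$ (i.e.\ $p=2$ or $p\mid\delta$), both sides equal $1$ at $k=1$ and $0$ at $k\geq 2$ --- for odd $p\mid\delta$ by square-freeness, for $p=2$ by the hypothesis $\delta\not\equiv 1\pmod 4$ when $\delta$ is odd and by square-freeness when $\delta\equiv 2\pmod 4$. With those local values corrected your argument matches the paper's, which computes exactly these $\rho(p^k)$ and then concludes via the identity of Dirichlet series rather than by quoting multiplicativity of the convolution directly.
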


\begin{proof}
First we notice that $\rho(1)=1, \rho(2)=1$ and $\rho(2^k)=0$ for $k\geq2$. Indeed, $n^2+2bn+c=(n+b)^2-b^2+c=(n+b)^2-\delta\,,$
so we have
$$\rho(d)=\#\left\{b\leq x<d+b : x^2\equiv\delta\pmod d\right\}\,.$$
When the integer $\delta$ is odd, the congruence $x^2\equiv\delta\pmod 4$ has a solution only if $\delta\equiv 1\pmod 4$, which is not true by our assumptions. If $\delta$ is even, we do not have solutions of $x^2\equiv\delta\pmod 4$ because $\delta$ is square-free.\\
For primes $p>2$ and $(p,\delta)=1$,  we have $\rho(p)=\#\left\{0\leq x<p : x^2\equiv\delta\pmod p\right\}$ , so $\rho(p^k)=1+\Leg{\delta}{p}$ for $k\geq 1$. If $(p,\delta)>1$, clearly $\rho(p)=1$. If $x$ is a solution of $x^2\equiv\delta\pmod {p^2}$, then $p$ divides $x$, and $\delta\equiv 0\pmod {p^2}$, which contradicts with $\delta$ being square-free. Therefore $\rho(p^k)=0$ if $k\geq 2$. \\

For a multiplicative function $\lambda(n)$ we denote the Dirichlet series $D_\lambda(s):=\sum_{n=1}^\infty \lambda(n)/n^s$. By the Chinese Remainder Theorem $\rho(d)$ is multiplicative, but not completely multiplicative. Obviously by definition (\ref{def1}) $\rho(d)\leq d$, so the Dirichlet series $D_{\rho}(s)$ is absolutely convergent for $\text{Re}(s)>2$. Therefore for $\text{Re}(s)>2$ we can write
\begin{align*}
D_\rho(s)=\sum_{n\geq 1}\frac{\rho(n)}{n^s}&=\prod_{p}\left(1+\frac{\rho(p)}{p^s}+\frac{\rho(p^2)}{p^{2s}}+\dots\right)\\
&=\left(1+\frac{\rho(2)}{2^s}\right)\prod_{p>2}\left(1+\frac{\rho(p)}{p^s}+\frac{\rho(p^2)}{p^{2s}}+\dots\right)\\
&=\left(1+2^{-s}\right)\prod_{\substack{p>2\\p\mid \delta}}\left(1+p^{-s}\right)\prod_{\substack{p>2\\(p,\delta)=1}}\left(1+\left(1+\Leg{\delta}{p}\right)\left(\frac{1}{p^s}+\frac{1}{p^{2s}}+\dots\right)\right)\\
&=\left(1+2^{-s}\right)\prod_{\substack{p>2\\p\mid\delta}}\left(1+p^{-s}\right)\prod_{\substack{p>2\\\Leg{\delta}{p}=1}}\left(1+2\left(\frac{1}{p^s}+\frac{1}{p^{2s}}+\dots\right)\right)\\
&=\left(1+2^{-s}\right)\prod_{\substack{p>2\\p\mid \delta}}\left(1+p^{-s}\right)\prod_{\substack{p>2\\\Leg{\delta}{p}=1}}\left(-1+\frac 2{1-p^{-s}}\right)\\
&=\left(1+2^{-s}\right)\prod_{\substack{p>2\\p\mid \delta}}\left(1+p^{-s}\right)\prod_{\substack{p>2\\\Leg{\delta}{p}=1}}\frac {1+p^{-s}}{1-p^{-s}}.
\end{align*}
Using definition (\ref{def2}) we can write 
$$\prod_{p>2}\frac{1+p^{-s}}{1-\chi(p)p^{-s}}=\prod_{\substack{p>2\\p\mid \delta}}\left(1+p^{-s}\right)\prod_{\substack{p>2\\\Leg{\delta}{p}=1}}\frac{1+p^{-s}}{1-p^{-s}}\prod_{\substack{p>2\\\Leg{\delta}{p}=-1}}\frac{1+p^{-s}}{1+p^{-s}}\,.$$
The third product equals $1$, so we get
$$D_\rho(s)=\left(1+2^{-s}\right)\prod_{p>2}\frac{1+p^{-s}}{1-\chi(p)p^{-s}}=\prod_{p}\left(1+p^{-s}\right)\prod_{p}\frac{1}{1-\chi(p)p^{-s}}=D_{\mu^2}(s)D_\chi(s)\,.$$

Then the coefficients of the Dirichlet series satisfy the identity
\begin{equation*}
\rho(d)=\sum_{lm=d}\mu^2(l)\chi(m)\,.\qedhere
\end{equation*}
\end{proof}


For any positive integer $N$ we denote 
\begin{equation}\label{eqX} X(N):=\sum_{1\leq n\leq N}\chi(n)\,.
\end{equation} 
We will need an explicit upper bound for the character sum $|X(N)|$. There are lots of works on such P\'olya-Vinegradov inequalities, aiming to reduce the upper bound, e.g. the papers of Qiu \cite{qiu} and Pomerance \cite{pomerance}. It is a question of taste which one to choose. We will apply the estimate of Qiu since its minor terms are somewhat easier.
\begin{lem}\label{lem2} Let $\delta$ be square-free, $\delta\not\equiv 1\pmod 4$, and consider the Dirichlet character $\chi$ defined in (\ref{def2}). For any $N\geq 1$ we have
$$\left|\sum_{n=1}^N \chi(n)\right|<\varkappa\,,$$
where $\varkappa=g(4|\delta|)$
and $g(q):=\frac 1 2\sqrt{q}\log{q}+1.2\sqrt{q}$.
\end{lem}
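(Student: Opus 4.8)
The plan is to identify $\chi$ with a \emph{primitive} real Dirichlet character whose conductor is exactly $4|\delta|$, and then to quote the explicit P\'olya--Vinogradov inequality of Qiu \cite{qiu}. The only structural point is the determination of the conductor, and this is precisely where the two hypotheses on $\delta$ are used.

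First I would fix the modulus of $\chi$. Since $\delta$ is square-free and $\delta\not\equiv 1\pmod 4$, we have $\delta\equiv 2$ or $3\pmod 4$, so that $D:=4\delta$ --- four times a square-free integer that is $2$ or $3$ modulo $4$ --- is a fundamental discriminant; consequently the Kronecker symbol $\Leg{4\delta}{\,\cdot\,}$ is a primitive real character modulo $|D|=4|\delta|$. Next I would check that it agrees with the $\chi$ of (\ref{def2}): for odd $n$ one has $\Leg{4\delta}{n}=\Leg{4}{n}\Leg{\delta}{n}=\Leg{\delta}{n}$ as Jacobi symbols, since $\Leg{4}{n}=\Leg{2}{n}^{2}=1$, while both $\Leg{4\delta}{\,\cdot\,}$ and $\chi$ vanish on even integers and on integers not coprime to $\delta$. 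Hence $\chi$ is the primitive real character of conductor $q:=4|\delta|$; in particular it is non-principal, and a P\'olya--Vinogradov estimate applies to it.

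Finally I would invoke Qiu's theorem, which for a primitive character $\psi$ modulo $q$ and every $N\geq 1$ bounds the initial segment $\sum_{n\leq N}\psi(n)$ by $\tfrac12\sqrt q\,\log q+c\,\sqrt q$, where the explicit constant $c$ takes slightly different shapes according as $\psi$ is even or odd but is at most $1.2$ in both cases. Applied with $\psi=\chi$ and $q=4|\delta|$ this yields
$$|X(N)| \leq \tfrac12\sqrt{4|\delta|}\,\log(4|\delta|)+c\,\sqrt{4|\delta|} \leq \tfrac12\sqrt{4|\delta|}\,\log(4|\delta|)+1.2\,\sqrt{4|\delta|} = g(4|\delta|) = \varkappa,$$
the last inequality being strict (because Qiu's bound is itself strict, or because $c<1.2$), which is the assertion. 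There is no genuine obstacle here beyond the Jacobi/Kronecker bookkeeping in the identification of the conductor: dropping $\delta\not\equiv 1\pmod 4$ would make the conductor $|\delta|$ instead of $4|\delta|$ and force a re-derivation of $\varkappa$ (and of several later constants in the proof of Theorem \ref{thm1}); and for the handful of smallest moduli one could anyway replace Qiu's inequality by the trivial bound $|X(N)|\leq\#\{\,n\leq N:(n,2\delta)=1\,\}<\varkappa$.
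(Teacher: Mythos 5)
Your argument is correct and is essentially the paper's own proof: both identify $\chi$ with the Kronecker symbol $\Leg{4\delta}{\cdot}$, use that $4\delta$ is a fundamental discriminant (this is exactly where $\delta$ square-free and $\delta\not\equiv 1\pmod 4$ enter) to conclude that $\chi$ is primitive of conductor $q=4|\delta|$, and then invoke Qiu's explicit P\'olya--Vinogradov inequality. The one caveat is that you quote Qiu's theorem already in the simplified shape $\tfrac12\sqrt{q}\log q+c\sqrt{q}$ with $c\le 1.2$, whereas Qiu's actual bound is $\tfrac{4}{\pi^2}\sqrt{q}\log q+0.38\sqrt{q}+0.608/\sqrt{q}+0.116\,(N,q)^2/q^{3/2}$, so one still has to check (as the paper does, via $(N,q)^2\le q^2$ and the elementary inequality $0.812x\log x+0.496x+0.608/x<x\log x+1.2x$ for $x\ge 1$) that it is majorized by $g(q)$ --- a short verification you should carry out rather than attribute to the citation; incidentally, your closing fallback $|X(N)|\le\#\{n\le N:(n,2\delta)=1\}<\varkappa$ fails for large $N$, though nothing in your main argument depends on it.
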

\begin{proof} By the Theorem of Qiu \cite{qiu} for a primitive Dirichlet character $\chi$ modulo $q$ we have the inequality
$$\left|\sum_{n=1}^N \chi(n)\right|<\frac 4 {\pi^2}\sqrt{q}\log q+0.38\sqrt{q} + 0.608/\sqrt{q}+0.116 (N,q)^2/q^{\frac 3 2}\,.$$
Trivially $(N,q)^2\leq q^2$ and we can further bound from above the latter expression 
$$\left|\sum_{n=1}^N \chi(n)\right|<0.406\sqrt{q}\log q+0.496\sqrt{q} + 0.608/\sqrt{q}\,.$$
The expression on the right-hand side suggests to introduce the function $K(x):=0.812x\log x+0.496 x + 0.608/x$. By a simple calculation we can check that for $x\geq 1$ we have $K(x)<x\log x+1.2x$. Then 
\begin{equation}\label{eq:char2}\left|\sum_{n=1}^N \chi(n)\right|<K(\sqrt{q})<\sqrt{q}\log(\sqrt q)+1.2\sqrt{q}=\frac 1 2\sqrt{q}\log q+1.2\sqrt q=g(q)\,.
\end{equation}
Now we return to our character $\chi$ defined in (\ref{def2}).  We notice that we can write 
\[\chi(n)=\Leg{4\delta}{n}\,,\]
where $\Leg{.}{.}$ is the Kronecker symbol. Since $\delta\equiv 2,3\pmod 4$ is square-free, $4\delta$ is a fundamental discriminant. Therefore $\chi(n)$ is a primitive character of conductor $q=4|\delta|$. Now the statement of the Lemma follows from (\ref{eq:char2}).
\end{proof}


Let $x\geq 1$ be a real number. Using Lemma \ref{lem1} and Lemma \ref{lem2} we get

\begin{equation}\label{sum1}\sum_{d\leq x}\rho(d)=\sum_{lm\leq x}\mu^2(l)\chi(m)=\sum_{l\leq x}\mu^2(l)\sum_{m\leq x/l}\chi(m)
\leq \varkappa\sum_{l\leq x}\mu^2(l)\leq \varkappa x\,.
\end{equation}

Now returning to (\ref{eq1}) we see that we need to estimate the sums $\sum_{d\leq x}\rho(d)/d$, for which we use again Lemma \ref{lem1}:
\begin{equation}\label{eq2}\sum_{d\leq x}\frac{\rho(d)}d =\sum_{d\leq x}\sum_{lm=d}\frac{\mu^2(l)\chi(m)}{lm}=\sum_{l\leq x}\frac{\mu^2(l)}l\sum_{m\leq x/l}\frac{\chi(m)}m\,.
\end{equation}
Consider the sum $ \sum_{m\leq x}\chi(m)/m$ for a positive real $x\geq 1$. By Abel's summation formula we have
\begin{equation}\label{Abel}\Sigma :=\sum_{m\leq x}\frac{\chi(m)}m=\frac{X(x)}x-\int_1^x X(u)\left(\frac 1 u\right)'du=\frac{X(x)}x+\int_1^x \frac{X(u)}{u^2}du\,.\end{equation}
If $x\leq \varkappa$, the trivial bound $\left|\sum_{m\leq x}\chi(m)\right|\leq x$ is better than the universal bound provided by Lemma \ref{lem2}. Indeed, in that case from (\ref{Abel}) we obtain
$$\Sigma \leq  \frac x x + \int_1^x \frac u {u^2}du=1+\log x\leq 1+\log\varkappa\,.$$
If $x>\varkappa$, for (\ref{Abel}) we can write
\begin{eqnarray*}\Sigma &\leq &\frac \varkappa x +\int_1^\varkappa \frac{u}{u^2}du+\int_\varkappa^x\frac{\varkappa}{u^2}du\\
&\leq & 1+\log\varkappa+\varkappa\int_1^x \frac{du}{u^2}= 1+\log\varkappa-\frac \varkappa x+1<\log\varkappa +2\,.
\end{eqnarray*}

We conclude that for any $x\geq 1$
\begin{equation}\label{kappa}\sum_{m\leq x}\frac{\chi(m)}{m}<\log\varkappa +2
\end{equation}
and then (\ref{eq2}) transforms into
\begin{equation}\label{eq2_2}\sum_{d\leq x}\frac{\rho(d)}d\leq \left(\log\varkappa+2\right)\sum_{l\leq x}\frac{\mu^2(l)}l\,.
\end{equation}
For the last sum we apply an explicit upper bound due to Ramar\'e (Lemma 3.4 in \cite{ramare}) :
\begin{lem}(Ramar\'e, \cite{ramare})\label{lem3} Let $x\geq 1$ be a real number. We have
$$\sum_{n\leq x}\frac{\mu^2(n)}n\leq \frac 6{\pi^2}\log x+1.166\,.$$
\end{lem}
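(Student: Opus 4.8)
The plan is to reduce the sum to harmonic sums by means of the classical identity $\mu^2(n)=\sum_{d^2\mid n}\mu(d)$, which is immediate from multiplicativity (or from $\sum_{n\ge1}\mu^2(n)n^{-s}=\zeta(s)/\zeta(2s)$). Writing $S(x):=\sum_{n\le x}\mu^2(n)/n$ and interchanging the order of summation one obtains
\[
S(x)=\sum_{d\le\sqrt x}\frac{\mu(d)}{d^2}\sum_{m\le x/d^2}\frac1m .
\]
Next I would insert the explicit form of the inner sum, $\sum_{m\le y}\frac1m=\log y+\gamma+\theta(y)$ with $|\theta(y)|\le 1/y$ for $y\ge1$ (a consequence of $\log n+\gamma\le\sum_{m\le n}\frac1m\le\log n+\gamma+\frac1{2n}$), write $\log(x/d^2)=\log x-2\log d$, and separate the four contributions:
\[
S(x)=\log x\sum_{d\le\sqrt x}\frac{\mu(d)}{d^2}\;-\;2\sum_{d\le\sqrt x}\frac{\mu(d)\log d}{d^2}\;+\;\gamma\sum_{d\le\sqrt x}\frac{\mu(d)}{d^2}\;+\;\sum_{d\le\sqrt x}\frac{\mu(d)}{d^2}\,\theta\!\left(\tfrac{x}{d^2}\right).
\]

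I would then complete the three main sums to $d=\infty$, using $\sum_{d\ge1}\mu(d)/d^2=1/\zeta(2)=6/\pi^2$ and $\sum_{d\ge1}\mu(d)(\log d)/d^2=\zeta'(2)/\zeta(2)^2$, and bound the resulting tails $\sum_{d>\sqrt x}$ together with the $\theta$-term by explicit quantities of size $O\bigl((\log x)/\sqrt x\bigr)$ (using $\sum_{d>D}d^{-2}<1/\lfloor D\rfloor$, a similar bound for $\sum_{d>D}(\log d)d^{-2}$, and $|\theta(x/d^2)|\le d^2/x$). This gives
\[
S(x)=\frac{6}{\pi^2}\log x+\Big(\gamma\,\frac{6}{\pi^2}-\frac{2\zeta'(2)}{\zeta(2)^2}\Big)+R(x),\qquad \gamma\,\frac{6}{\pi^2}-\frac{2\zeta'(2)}{\zeta(2)^2}=1.0438\ldots ,
\]
where $R(x)$ is explicitly bounded and $R(x)\to0$. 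Choosing $x_0$ so that the bound on $|R(x)|$ stays below $1.166-1.0439<0.122$ for all $x\ge x_0$ settles the inequality for large $x$; for $1\le x<x_0$ one checks it directly, $S(x)$ being a step function that increases only at squarefree integers, so this is a finite computation.

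The step I expect to be the main obstacle is controlling $R(x)$ sharply enough. The target inequality is essentially tight in the moderate range: already $S(3)=\tfrac{11}{6}>1.833$, whereas $\tfrac{6}{\pi^2}\log 3+1.0439<1.712$, a slack of only about $0.12$, with a comparable near-equality recurring near $x=7$ and $x=10$. So the tail estimate for $R(x)$ must not be wasteful, or $x_0$ becomes large and the direct verification unwieldy. The natural remedies are to handle the sign change of $\log(x/d^2)+\gamma$ at $d=x$ when bounding $\sum_{d>\sqrt x}\mu(d)\bigl(\log(x/d^2)+\gamma\bigr)/d^2$, exploiting that this factor is monotone in $d$ on each of the two pieces, or else to re-derive the same expansion by Abel summation from an explicit mean value estimate $\bigl|\sum_{n\le x}\mu^2(n)-\tfrac{6}{\pi^2}x\bigr|\le c_0\sqrt x$. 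Either route yields a constant comfortably below $1.166$ for $x$ past a modest $x_0$, and the remaining finite range is dispatched numerically.
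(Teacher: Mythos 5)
The paper does not prove this lemma at all: it is imported verbatim from Ramar\'e (Lemma 3.4 of \cite{ramare}), so there is no internal proof to compare against; your sketch supplies the standard elementary argument and is sound in outline. The identity $\mu^2(n)=\sum_{d^2\mid n}\mu(d)$, the interchange of summation, the explicit harmonic-sum expansion, the completion of the three $d$-sums, and your value $\gamma\cdot\frac{6}{\pi^2}-\frac{2\zeta'(2)}{\zeta(2)^2}=1.0438\ldots$ for the limiting constant are all correct, and the error $R(x)$ is genuinely $O(1/\sqrt x)$: the $\theta$-contribution is at most $\lfloor\sqrt x\rfloor/x$, and the completed tails contribute at most about $(2+\gamma)/\sqrt x$, so a threshold $x_0$ of order a few hundred suffices, after which the inequality is checked at the (squarefree) jump points of $S(x)$ below $x_0$. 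You are also right that this finite check is not a formality: the constant $1.166$ is essentially forced by $x=3$, where $11/6-(6/\pi^2)\log 3=1.16545\ldots$, so any proof of the stated inequality must pass through this numerical verification, and the asymptotic part only needs $|R(x)|<0.12$ or so. Two small blemishes, neither structural: the sign change of $\log(x/d^2)+\gamma$ occurs at $d=\sqrt{x}\,e^{\gamma/2}$, not at $d=x$; and the bound $|\theta(y)|\le 1/y$ for all real $y\ge1$ deserves a word of justification (the safe bound $|\theta(y)|\le 1/\lfloor y\rfloor$ serves equally well). Compared with the paper, which simply cites the estimate, your route is self-contained and makes transparent both the true asymptotic constant and the sharper value $1.048$ for $x\ge1000$ mentioned in the paper's Remark, at the price of carrying out the explicit tail estimates and a modest computation.
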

Applying this lemma in (\ref{eq2_2}) we get
\begin{equation}\label{sum2} \sum_{d\leq x}\frac{\rho(d)}d<\left(\log\varkappa+2\right)\left(0.608 \cdot\log x+1.166\right)\,.
\end{equation}
We plug the inequalities (\ref{sum1}) and (\ref{sum2}), with $x=\sqrt{f(N)}$, into (\ref{eq1}):
\begin{equation}\label{eq:sqrtF}\sum_{n\leq N}\tau(f(n))\leq 2N\left(\log\varkappa+2\right)\left(0.608\cdot \log \left(\sqrt{f(N)}\right)+1.166\right)+2\varkappa\sqrt{f(N)}\,.
\end{equation}

Now notice that $f(n)=n^2+2bn+c\leq n^2+2|b|n+|c|\leq (1+2|b|+|c|)n^2$ for $n\geq 1$. Then $\sqrt{f(N)}\leq\xi N$, where $\xi=\xi(b,c)=\sqrt{1+2|b|+|c|}$. Thus $\log\left(\sqrt{f(N)}\right)\leq \log\xi+\log N$.\\

Let $A$ be the least positive integer such that $A\geq\max\left(|b|,|c|^{1/2}\right)$. Another way to bound from above $\sqrt{f(N)}$ is by using $f(n)=n^2+2bn+c\leq n^2+2|b|n+|c|\leq n^2+2An+A^2=(n+A)^2$. Then $\sqrt{f(N)}\leq N+A$.\\

We apply these two bounds to transform further (\ref{eq:sqrtF}).
\begin{eqnarray*}\sum_{n\leq N}\tau(f(n))&\leq & 2N(\log\varkappa+2)\left(0.608\cdot \log N+0.608\cdot\log\xi+1.166\right)+2\varkappa(N+A)\\
&=& 1.216\left(\log\varkappa+2\right) N\log N+2\left(\varkappa+(\log\varkappa+2)\left(0.608\cdot\log\xi+1.166\right)\right)N+2\varkappa A\\
&=& C_1 N\log N + C_2 N + C_3\,,
\end{eqnarray*}
with the constants $C_1, C_2, C_3$ defined in (\ref{defC}). 
This proves Theorem \ref{thm1}.

\section{Proof of Theorem \ref{thm2}}

If we apply Theorem \ref{thm1} for the polynomial $f(n)=n^2+1$, we obtain the bound
$$\sum_{n\leq N}\tau(n^2+1)<4.051\cdot N\log N+16.8 N+7.58\,.$$ 
We can do better if we notice that in (\ref{def2}) we actually deal with the odd Dirichlet character modulo $4$:
\begin{center}
$\chi(n)=\begin{cases}
1,&\text{if }n\equiv1\pmod 4 ;\\
-1,&\text{if }n\equiv3\pmod 4 ;\\
0,&\text{otherwise .}
\end{cases}$
\end{center}
In this case the character sum $X(N)$ defined in (\ref{eqX}) can take only values $0$ or $1$, so we do not need to use Lemma \ref{lem2}. We can replace the expressions $\varkappa$ in (\ref{sum1}) and $\log\varkappa +2$ in (\ref{sum2}) simply by $1$. Something more, the summation in (\ref{eq1}) over $d\leq\sqrt{N^2+1}$ is actually over $d\leq N$. Therefore Theorem \ref{thm2} follows from plugging the estimates (\ref{sum1}) and (\ref{sum2}) into  (\ref{eq1}), with $x=N$, and $1$ instead of $\varkappa$ and $\log\varkappa +2$.

\begin{rem} In the estimate (\ref{sum1}) we used the trivial bound $\sum_{l\leq N}\mu^2(l)\leq N$, but we can do slightly better for larger values of $N$. First we can use again Lemma 3.4 (Ramar\'e, \cite{ramare}) which says that for $N\geq 1000$ the constant $1.166$ from its statement can be substituted by $1.048$. Another result of Ramar\'e (Lemma 3.1,  \cite{ramare}) says that for $N\geq 1700$ we have  $\sum_{l\leq N}\mu^2(l)\leq 0.62\cdot N$. After a simple computer check for the cases $1000 \leq N<1700$ we see that this holds also for any positive integer in this range. Therefore for any $N\geq 1000$ we have slightly smaller upper bound:
$$\sum_{n\leq N}\tau(n^2+1)\leq 2N(0.608\cdot \log N+1.048)+2\cdot 0.62\cdot N=1.216 \cdot N\log N+3.336\cdot N\,.$$
\end{rem}

\section{Some examples} Using McKee's theorem from \cite{mckee} we can compute numerically the constant $\lambda(\delta)$ from the asymptotic formula
$$\sum_{n\leq N}\tau(f(n))\sim\lambda(\delta) N\log N\,,$$
where $f(n)=n^2+2bn+c$ and $\delta=b^2-c<0$. Let $(C_1, C_2, C_3)$ be the triple of constants from Theorem \ref{thm1}, such that
$$\sum_{n\leq N}\tau(f(n))<C_1 N\log N+C_2 N+C_3\,.$$ 
With this notation for $f(n)=n^2+1$ we have $\lambda(-1)\sim 0.955$ and $(C_1, C_2, C_3)\sim (4.051, 16.8, 7.58)$, whereas Theorem \ref{thm2} improves this to $(C_1, C_2, C_3)\sim (1.216, 4.332, 0)$. In general for large $|\delta|$ we have $C_1\sim\log{|\delta|}$, which is not too far from the coefficient in McKee's formula. By the class number formula one can see that $\lambda(\delta)$ is close to the value of the corresponding Dirichlet $L$-function at $1$. \\
  
More examples of the explicit upper bounds for few more polynomials is given in the following table.

\[ 
\def\arraystretch{1.7}
\arraycolsep=2.5pt
\begin{array}{| l | l | l | l | }
    \hline
    f(n) & \delta& \lambda(\delta) &\left (C_1,C_2,C_3\right) \\ \hline
    n^2+1 &-1 & 0.955 & (1.216, 4.332, 0) \\ \hline
    n^2+10n+27 & -2 &1.351 & (4.68, 30.15, 76.02)\\    \hline
    n^2+4n+10 & -6 & 1.56 & (5.7, 46, 110) \\    \hline
    n^2+52n+706 & -30 &  1.395 & (6.9, 115, 2126)\\ \hline
    n^2+10n-26 & 51 & - & (7.4, 138, 662) \\ \hline
  \end{array}
\]
\vspace{0.2cm}

The (easy) code for the performed computations can be found in \cite{Lcode}. It can be used to estimate explicitly the divisor sum over any other quadratic polynomial $f(n)$ which satisfies the conditions of Theorem \ref{thm1}. 

\subsection*{Acknowledgments.} The author thanks Christian Elsholtz for suggesting this problem. Both his and Andr\'as Bir\'o's comments on earlier versions of this note are appreciated. This work is partially supported by Hungarian Scientific Research Fund (OTKA) grant no. K104183.

\end{document}